\providecommand{\oline}[1]{\mkern 1.5mu\overline{\mkern-1.5mu#1}}
\renewcommand{\hbar}{\oline{h}}
\newcommand{\va}{\mathbf{a}}
\newcommand{\vb}{\mathbf{b}}
\newcommand{\ve}{\mathbf{e}}
\newcommand{\vr}{\mathbf{r}}
\newcommand{\vx}{\mathbf{x}}
\newcommand{\mA}{\mathbf{A}}
\newcommand{\norm}[1]{\left\| #1 \right\|}
\newcommand{\abs}[1]{\left| #1 \right|}
\newtheorem{theorem}{Theorem}[section]
\newtheorem{lemma}[theorem]{Lemma}
\newtheorem{corollary}[theorem]{Corollary}
\newtheorem{remark}[theorem]{Remark}
\newcommand{\xinit}{\vx_0}
\newcommand{\xk}{\vx_k}
\newcommand{\xkpo}{\vx_{k+1}}
\newcommand{\xopt}{\vx^\star}
\newcommand{\einit}{\ve_0}
\newcommand{\ek}{\ve_k}
\newcommand{\ekpo}{\ve_{k+1}}
\def\BibTeX{{\rm B\kern-.05em{\sc i\kern-.025em b}\kern-.08em
    T\kern-.1667em\lower.7ex\hbox{E}\kern-.125emX}}
\begin{document}

\title{QuantileRK: Solving Large-Scale Linear Systems with Corrupted, Noisy Data}

\author{\IEEEauthorblockN{Benjamin Jarman\IEEEauthorrefmark{1},
Deanna Needell\IEEEauthorrefmark{2}}
\IEEEauthorblockA{Department of Mathematics, University of California, Los Angeles, Los Angeles, CA \\
\IEEEauthorrefmark{1}bjarman@math.ucla.edu,
\IEEEauthorrefmark{2}deanna@math.ucla.edu}}
\maketitle

\begin{abstract}
Measurement data in linear systems arising from real-world applications often suffers from both large, sparse corruptions, and widespread small-scale noise. This can render many popular solvers ineffective, as the least squares solution is far from the desired solution, and the underlying consistent system becomes harder to identify and solve. QuantileRK is a member of the Kaczmarz family of iterative projective methods that has been shown to converge exponentially for systems with arbitrarily large sparse corruptions. In this paper, we extend the analysis to the case where there are not only corruptions present, but also noise that may affect every data point, and prove that QuantileRK converges with the same rate up to an error threshold. We give both theoretical and experimental results demonstrating QuantileRK's strength.
\end{abstract}

\section{Introduction}
From medical imaging \cite{hounsfield_CAT}, to image reconstruction and signal processing \cite{Herman1993AlgebraicRT, Feichtinger1992NewVO}, to modern data science and statistical analysis \cite{leskovec_rajaraman_ullman_2014}, solving systems of linear equations, has long been a central problem in applied mathematics. Such systems will often be large, overdetermined, and consistent: we consider the system
$
\mA \vx = \vb,
$
where $\mA \in \mathbb{R}^{m \times n}$, $\vb \in \mathbb{R}^m$, and $m \geq n$, with solution $\xopt$. 

A practical challenge is that measurement data often becomes damaged during collection, transmission, or storage, violating consistency. Two important types of damage are 
\begin{itemize}
    \item \emph{corruption}; large errors due to faulty software, hardware, or mismeasurement, affecting a small fraction of data, and
    \item \emph{noise}; small errors due to imprecision or processing that may affect every measurement.
\end{itemize}

The Randomized Kaczmarz (RK) method \cite{ogkacz, vershstrohkacz} is a popular iterative projective method for large, overdetermined, consistent systems due to its exponential convergence and low memory requirements. An initial guess $\xinit$ is iteratively projected onto randomly chosen hyperplanes corresponding to solution spaces to rows of the system. More precisely, letting $\va_1, \cdots, \va_m$ be the rows of $\mA$, the $k$th iterate is computed as
\[
\xk = \vx_{k-1} + \frac{b_i - \va_i \vx_{k-1}}{\norm{\va_i}^2}\va_i^\top,
\]
where row $i$ has been chosen with probability proportional to its Euclidean norm (denoted $\norm{\cdot}$).

Strohmer and Vershynin \cite{vershstrohkacz} showed that RK converges exponentially in expectation. This was extended to the noisy case in \cite{needellnoisyrk} where a vector of noise $\vr$ is added to the measurement data $\vb$. In this case, exponential convergence is still achieved up to an error horizon depending on the size of the noise. Namely, letting $\ek := \xk - \xopt$ be the error at the $k$th iteration,
\[
\mathbb{E}\norm{\ek}^2 \leq \left(1 - \frac{\sigma_{\min}^2(\mA)}{\norm{\mA}_F^2}\right)\norm{\einit}^2 + \frac{\norm{\mA}_F^2}{\sigma_{\min}^2(\mA)}\norm{\vr}^2,
\]
where $\sigma_{\min}(\mA)$ is the smallest singular value of $\mA$, and $\norm{\cdot}_F$ is the Fr\"obenius norm.

Variants of RK, including those involving multi-row projections or greedy row selection, have been shown to exhibit similar robustness to noise \cite{needell2014paved, HadNeeMotz18}.

Corrupted data proves more of a challenge for projection-based methods: projecting onto a row with large corruption can cause the iterate to move far from the solution 
and severely disrupt convergence. Recent modifications have been designed to handle this issue, see \cite{HadNeeCorr18, DurWisdm2017, swartquantile}. In this paper we focus on the method introduced in \cite{swartquantile} and analyzed further in \cite{steinerberger2021quantilebased}, where the authors constructed a quantile-based modification of RK, QuantileRK, in which the quantile of the absolute values of a subresidual is used to detect and avoid projecting onto corrupted rows.

Here, we extend the theory and show that QuantileRK is robust to both corruptions and noise in the measurement data.  We give a theoretical result showing exponential convergence down to an error horizon, and provide experiments demonstrating the strength of the method in identifying and solving the underlying system beneath highly damaged measurement data.

\section{Proposed Method}
\subsection{Preliminaries \& Notation}

We aim to solve the consistent system $\mA \vx = \tilde{\vb}$ with access only to the observed measurement vector $\vb = \tilde{\vb} + \vb^C + \vr$, where $\vb^C$ is a sparse vector of corruptions, and $\vr$ is a vector of noise. In practice, $\vb^C$ will contain large entries, and $\vr$ small, but we make no such assumption for our theory. We define $\beta$ to be the fraction of data that is corrupted, i.e. $\beta = |\{i : b^C_i > 0\}|/m$.

We build on the foundations established in \cite{swartquantile}. To utilize results from random matrix theory, we view $\mA$ as a random matrix and make the following assumptions, that will for example hold if $\mA$ is Gaussian with normalized rows:

\noindent \textbf{Assumption 1.} All rows $\va_i$ of $\mA$ are independent, and $\sqrt{n}\va_i$ is mean zero isotropic with uniformly bounded subgaussian norm, $\norm{\sqrt{n}\va_i}_{\psi_2} \leq K$.
\newline
\noindent \textbf{Assumption 2.} Each entry $a_{ij}$ of $\mA$ has probability density function $\phi_{ij}$ satisfying $\phi_{ij}(t) \leq D\sqrt{n}$ for all $t\in \mathbb{R}$. 

We define the $q$-quantile of the absolute values of the residual, or sub-residual formed by rows in an index set $S$:
\begin{align*}
    Q_q(\vx) &= q-\text{quantile}\{|b_i - \langle \va_i, \vx \rangle|: i \in [m]\} \\
    Q_q(\vx, S) &= q-\text{quantile}\{|b_i - \langle \va_i, \vx \rangle| : i \in S\}.
\end{align*}

Throughout, $C,c,c_1,c_2,\cdots$ refer to absolute constants whose values may vary line by line.

\subsection{QuantileRK}
Projecting iterates onto corrupted hyperplanes will often cause abnormally large movements. Our method detects this by taking a quantile of the residual entries of a collection of rows at each iteration, and deeming a row acceptable for projection if its residual entry is less than said quantile. Whilst the method may still project onto corrupted rows, the movement away from the solution caused by these 'bad' projections will on average be outweighed by projections onto uncorrupted rows. We present pseudocode for the method in \cref{QuantileRK}, under the assumption that $\mA$ has been standardized to have normalized rows for simplicity.

\begin{algorithm}
	\caption{QuantileRK(q)}\label{QuantileRK}
	\begin{algorithmic}[1]
		\Procedure{QuantileRK}{$\mA,\vb$, q, t, N}
		\State{$\vx_0 = 0$}
		\For{j = 1, \ldots, N}
		\State{sample $i_1, \ldots i_t \sim \text{Uniform} (1,\ldots, m)$}
		\State{sample $k\sim\text{Uniform}(1, \ldots, m)$}
		\State{compute $q_k = Q_q(\vx_{j-1}, \{i_l: l \in [t]\})$}
		\If{ $\abs{\langle \va_k, \vx_{j-1}\rangle - b_k} \leq q_k$}
		\State{$\vx_j = \vx_{j-1} - \left(\langle \vx_{j-1}, \va_k \rangle - b_k\right) \va_k$}
		\Else
		\State{$\vx_j = \vx_{j-1}$}
		
		\EndIf
		\EndFor{}
		
		\Return{$\vx_N$}
		\EndProcedure
	\end{algorithmic}
\end{algorithm}

In \cite{swartquantile}, the authors proved that for $\mA$ sufficiently tall and $\beta$ sufficiently small, QuantileRK convergences exponentially, with
\begin{equation*}
    \mathbb{E}(\norm{\ek}^2) \leq \left(1 - \frac{C_q}{n}\right)^k \norm{\einit}^2,
\end{equation*}

Our main result, \cref{maintheorem}, builds on this and shows that the addition of noise does not harm the convergence rate, and exponential convergence is still achieved up to a horizon proportional to the size of the noise.

\begin{theorem}
\label{maintheorem}
Let the linear system be defined by the standardized random matrix $\mA \in \mathbb{R}^{m \times n}$ satisfying Assumptions 1 and 2. Assume that $\beta \leq \min(cq, 1-q)$, and that $m \geq Cn$. Then with high probability, the iterates produced by QuantileRK, with $q \in (0,1)$, where in each iteration the quantile is computed using the full residual, and initialized with arbitrary $\vx_0 \in \mathbb{R}^n$, satisfy
\begin{equation}
\mathbb{E}(\norm{\ek}^2) \leq \left(1 - \frac{C_q}{n}\right)^k \norm{\einit}^2 + \frac{2n}{c_1}\norm{\vr}_\infty^2.
\end{equation}
\end{theorem}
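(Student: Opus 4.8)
Write $h_i := \langle \ekmo, \va_i\rangle$ and let $\eta_i := r_i + b^C_i$ denote the total additive error on row $i$, so that $\eta_i = r_i$ on the uncorrupted rows. Since $\mA$ has unit-norm rows, a direct expansion of the accepted update shows that projecting onto row $k$ yields $\norm{\ek}^2 = \norm{\ekmo}^2 - h_k^2 + \eta_k^2$, while a rejected step leaves the error unchanged. Let $q_k = Q_q(\ekmo)$, which is deterministic here since the quantile uses the full residual, and let $A = \{i : \abs{h_i - \eta_i}\le q_k\}$ be the accepted set. Averaging over the uniformly chosen projection row gives the one-step identity
\begin{equation*}
\mathbb{E}_k\!\left[\norm{\ek}^2\right] = \norm{\ekmo}^2 - \frac{1}{m}\sum_{i\in A}\left(h_i^2 - \eta_i^2\right).
\end{equation*}
The plan is to lower bound $\frac{1}{m}\sum_{i\in A}(h_i^2 - \eta_i^2)$ by $\frac{C_q}{n}\norm{\ekmo}^2$ minus a noise term, splitting the sum according to whether $i$ is uncorrupted ($i \in G$) or corrupted ($i \in B$, with $\abs{B} = \beta m$ a fixed set).

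First I would condition on a high-probability event for $\mA$, extracted from Assumptions 1 and 2 exactly as in \cite{swartquantile}, on which the following hold uniformly over all $\ve \in \Rmbb^n$: (i) the quantile satisfies $c\,\norm{\ve}/\sqrt{n} \le Q_q(\ve) \le C\,\norm{\ve}/\sqrt{n} + C'\norm{\vr}_\infty$; and (ii) a constant fraction of uncorrupted rows have $h_i^2$ of order $\norm{\ve}^2/n$, i.e.\ lying in a fixed shell. The upper quantile bound forces large corruptions to be rejected, while the lower bound guarantees the shell rows of (ii) are accepted. Summing $h_i^2$ over this accepted shell yields the contraction $\frac{1}{m}\sum_{i \in A\cap G} h_i^2 \ge \frac{C_q}{n}\norm{\ekmo}^2$, and the good-row noise loss is immediately controlled by $\frac{1}{m}\sum_{i\in A\cap G} r_i^2 \le q\,\norm{\vr}_\infty^2$.

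For the corrupted rows I would use $h_i^2 - \eta_i^2 = \mu_i(2h_i - \mu_i)$ with $\mu_i := h_i - \eta_i$ and $\abs{\mu_i}\le q_k$ on $A$, so the damage is at most $\frac{q_k}{m}\sum_{i\in B}(2\abs{h_i} + q_k)$. The essential trick is that $B$ is a \emph{fixed} set of size $\beta m$, so Cauchy--Schwarz gives $\sum_{i\in B}\abs{h_i} \le \sqrt{\beta m}\,(\sum_i h_i^2)^{1/2} \lesssim m\sqrt{\beta/n}\,\norm{\ekmo}$, using $\sum_i h_i^2 = \norm{\mA\ekmo}^2 \lesssim \tfrac{m}{n}\norm{\ekmo}^2$. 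Together with the quantile bound this makes the corrupted-row damage $O\!\big(\tfrac{\sqrt{\beta}}{n}\norm{\ekmo}^2\big)$ plus a cross term $O\!\big(\sqrt{\beta/n}\,\norm{\ekmo}\,\norm{\vr}_\infty\big)$ that I split by AM--GM into a small multiple of $\tfrac{1}{n}\norm{\ekmo}^2$ and a multiple of $\norm{\vr}_\infty^2$.

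Since $\beta \le cq$, the $\sqrt{\beta}$ damage is dominated by the good-row gain, so the net one-step bound reads $\mathbb{E}_k[\norm{\ek}^2] \le (1 - C_q/n)\norm{\ekmo}^2 + B$ with a single collected noise term $B = O(\norm{\vr}_\infty^2)$. Taking total expectations, iterating, and summing the geometric series converts $B$ into the horizon $\tfrac{n}{C_q}B$, which matches the claimed $\tfrac{2n}{c_1}\norm{\vr}_\infty^2$. I expect the main obstacle to be the second step: showing that the uniform-in-$\ve$ quantile and shell estimates of \cite{swartquantile} survive the addition of the noise vector $\vr$ --- in particular that $\vr$ neither inflates $Q_q(\ve)$ beyond the $O(\norm{\vr}_\infty)$ additive term nor knocks too many shell rows out of the accepted set. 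This is precisely where the bounded-density Assumption 2 and the subgaussian Assumption 1, combined with an $\epsilon$-net over the sphere, are indispensable.
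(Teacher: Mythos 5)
Your proposal is correct in outline, but it takes a genuinely different route from the paper's proof. The paper never expands the update exactly: it conditions on the acceptance event (probability $q$), buffers the corrupted rows inside a set $J$ of size $2\beta m$, splits the accepted indices into $I_1 \subset J$ and $I_2$ outside $J$, and then treats a step on $I_2$ as a black-box iteration of \emph{noisy RK} on the subsystem $\mA_{I_2}\vx = \tilde{\vb}_{I_2} + \vr_{I_2}$ — so Assumption 2 enters only through the submatrix singular-value bound $\sigma_{\min}(\mA_{I_2}) \gtrsim \sqrt{m/n}$ from \cite{swartquantile}, which feeds the rate of \cite{needellnoisyrk}; the $I_1$ damage is bounded by the quantile upper bound (\cref{qconcentrates}, exactly your $Q_q \leq C\norm{\ve}/\sqrt{n} + \norm{\vr}_\infty$) together with the averaged bound $\frac{1}{|I_1|}\sum_{i \in I_1}|\langle \ek, \va_i\rangle| \leq C\norm{\ek}/\sqrt{\beta n}$ (Lemma 4 of \cite{swartquantile}), and the cross term $\norm{\vr}_\infty\norm{\ek}$ is removed by a case split on $\sqrt{n}\norm{\vr}_\infty$ versus $\norm{\ek}$, which your AM--GM step performs implicitly. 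Your route — the exact identity $\norm{\ek}^2 = \norm{\ekmo}^2 - h_k^2 + \eta_k^2$ (valid since rows are unit-norm), Cauchy--Schwarz over the fixed corrupted set, and anti-concentration for the good-row gain — is closer in spirit to \cite{steinerberger2021quantilebased}, and it buys a transparent per-iteration ledger that avoids invoking noisy RK and its conditional-sampling subtleties, at the price of needing a uniform small-ball estimate at scale $1/\sqrt{n}$, which is where Assumption 2 must do its work in your version. Three refinements: first, the obstacle you flag as the main one is in fact the easy part — noise shifts every residual entry by at most $\norm{\vr}_\infty$, so the noiseless uniform estimates of \cite{swartquantile} transfer by the triangle inequality, exactly as the paper's \cref{lem:residualbound} does; second, you do not actually need a quantile \emph{lower} bound or a shell whose upper constant must sit below that lower bound (as written these constants do not obviously align) — among the $\geq qm$ accepted rows, discard the $\beta m$ corrupted ones and the at most $(q/2)m$ good rows with $|h_i| \leq c_q \norm{\ekmo}/\sqrt{n}$ guaranteed by uniform anti-concentration, and the remaining accepted rows already deliver the gain $C_q\norm{\ekmo}^2/n$; third, after Cauchy--Schwarz your corrupted-row damage scales like $\sqrt{\beta}/n$, so the final domination requires $\sqrt{\beta}$ small relative to the $q$-dependent gain constant — the same $\sqrt{\beta} \leq cq$ condition the paper states mid-proof — and this is not literally implied by $\beta \leq cq$ alone, so the bookkeeping there should be made explicit.
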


\begin{remark}
It is natural to ask whether one may consider some of the larger entries in $\vr$ as corruptions, by increasing $\beta$, leading to a smaller error horizon. This is possible, but there is a tradeoff: increasing $\beta$ forces a decrease in $q$, which slows convergence. The effectiveness will be application dependent: if the distribution of noise is concentrated, it would take a significant increase in $\beta$ to see a decrease in the error horizon, leading to substantially slower convergence; however, if the noise has large spikes, increasing $\beta$ may be worthwhile.
\end{remark}

\subsection{Proof of Main Result}

We follow the proof of the main QuantileRK convergence result from \cite{swartquantile} closely, making necessary alterations for the presence of noise throughout. We firstly present a modified version of Remark 3 from said paper:

\begin{lemma}
\label{lem:residualbound}
Let $\alpha \in (0,1]$, let the random matrix $\mA \in \mathbb{R}^{m \times n}$ satisfy Assumption 1, and let $\xopt$ be the solution to the consistent system $\mA \vx = \tilde{\vb}$. Then if $m \geq n$, there exists a constant $C_K >0$ so that with probability at least $1 - 2e^{-m}$, for every $\vx \in \mathbb{R}^{n}$ the bound
\[
|\langle \va_i, \vx \rangle - b_i| \leq \frac{C_K}{\alpha \sqrt{n}}\norm{\vx - \xopt} + \norm{r}_{\infty}
\]
holds for all but at most $(\alpha + \beta)m$ indices $i$.
\end{lemma}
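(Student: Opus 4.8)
The plan is to reduce the stated bound to a purely geometric statement about the rows of $\mA$, and then establish that statement by a union bound over the operator norms of row-submatrices. First I would use consistency and the measurement model to rewrite the residual. Since $\tilde b_i = \langle\va_i,\xopt\rangle$ and $b_i = \tilde b_i + b^C_i + r_i$, every index satisfies $\langle\va_i,\vx\rangle - b_i = \langle\va_i,\vx-\xopt\rangle - b^C_i - r_i$. The at most $\beta m$ indices on which $b^C_i\neq 0$ are discarded into the exceptional set for free, since the lemma already tolerates $(\alpha+\beta)m$ exceptions. On the remaining indices $b^C_i=0$, the triangle inequality gives $|\langle\va_i,\vx\rangle - b_i|\le|\langle\va_i,\vx-\xopt\rangle| + \norm{r}_\infty$, so it suffices to control $|\langle\va_i,\vx-\xopt\rangle|$. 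Normalizing $\vu := (\vx-\xopt)/\norm{\vx-\xopt}$ (the case $\vx=\xopt$ being trivial), the whole lemma follows once I show that the event
\[
\mathcal E:\quad \text{for every unit } \vu,\ \bigl|\{\,i:|\langle\sqrt n\,\va_i,\vu\rangle| > C_K/\alpha\,\}\bigr|\le\alpha m
\]
holds with probability at least $1-2e^{-m}$, since then the non-corrupted exceptions number at most $\alpha m$ and the total is at most $(\alpha+\beta)m$.

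To prove $\mathcal E$ I would argue through submatrix norms. Suppose $\mathcal E$ fails for some unit $\vu$; then some index set $I$ with $|I| = s := \lceil\alpha m\rceil$ has $|\langle\sqrt n\,\va_i,\vu\rangle| > C_K/\alpha$ for all $i\in I$. Writing $\mathbf{A}_I$ for the $s\times n$ matrix with rows $\{\sqrt n\,\va_i\}_{i\in I}$, this forces $\norm{\mathbf{A}_I}\ge\norm{\mathbf{A}_I\vu}\ge\sqrt s\cdot C_K/\alpha\ge C_K\sqrt{m/\alpha}$. Thus failure of $\mathcal E$ implies that some $s$-subset has operator norm exceeding $C_K\sqrt{m/\alpha}$. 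By Assumption 1 the rows of $\mathbf{A}_I$ are independent, isotropic, mean-zero and subgaussian with norm at most $K$, so the standard deviation bound for matrices with independent subgaussian rows gives, for each fixed $I$, $\norm{\mathbf{A}_I}\le\sqrt s + C K^2\sqrt n + \tau$ with probability at least $1-2\exp(-c\tau^2/K^4)$. Using $s\le m$ and $n\le m$, choosing $C_K$ a large enough multiple of $K$ makes $C_K\sqrt{m/\alpha}$ exceed $\sqrt s + CK^2\sqrt n$ by at least $\tfrac12 C_K\sqrt{m/\alpha}$, so the per-subset failure probability is at most $2\exp(-c' C_K^2 m/(\alpha K^4))$.

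Finally I would take a union bound over the $\binom ms\le(e/\alpha)^{2\alpha m}$ choices of $I$, giving a failure probability for $\mathcal E$ of at most $\exp\bigl(m[\,2\alpha\ln(e/\alpha) - c' C_K^2/(\alpha K^4)\,]\bigr)$. Since $\alpha\ln(e/\alpha)$ is bounded on $(0,1]$ while the subtracted term is at least $c' C_K^2/K^4$, enlarging $C_K$ (depending only on $K$) drives the exponent below $-m$ for every $\alpha\in(0,1]$, which yields the claimed probability $1-2e^{-m}$. I expect the crux to be exactly this last balancing: the reason a single constant $C_K$ works simultaneously for all $\alpha$ is that the threshold scales like $1/\alpha$, producing an $m/\alpha$ term in the exponent that dominates the $\alpha m\ln(1/\alpha)$ entropy of the subset union bound uniformly in $\alpha$. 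A net argument over the sphere would also work, but it is more delicate: controlling how the ``bad set'' perturbs between nearby directions forces a fine net of cardinality $\sim(\sqrt n/\alpha)^n$, whose $n\ln n$ exponent is hard to absorb when $m$ is only a constant multiple of $n$, whereas the submatrix-norm route sidesteps this and uses only $m\ge n$.
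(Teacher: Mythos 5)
Your proposal is correct, and at the level of this lemma it follows the same skeleton as the paper: write $b_i = \langle \va_i, \xopt\rangle + b^C_i + r_i$, discard the at most $\beta m$ corrupted indices into the exceptional set, and absorb the noise via the triangle inequality, reducing everything to a uniform bound on $|\langle \va_i, \vu\rangle|$ for unit $\vu$ off an $\alpha m$-sized exceptional set. The difference is that the paper black-boxes exactly this last statement as Proposition~2 of \cite{swartquantile}, whereas you prove it from scratch via the submatrix route: failure for some unit $\vu$ forces an $s \times n$ submatrix ($s = \lceil \alpha m\rceil$) with operator norm at least $C_K\sqrt{m/\alpha}$, which the subgaussian-rows deviation bound plus a union bound over $\binom{m}{s}$ subsets rules out. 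This is essentially how the cited proposition is proved in the source, so what your write-up buys is self-containedness, and your closing observation correctly identifies the crux that makes the citation work uniformly in $\alpha$: the $1/\alpha$ scaling of the threshold yields an $m/\alpha$ tail exponent that dominates the $\alpha m \ln(1/\alpha)$ subset entropy, so a single $C_K = C_K(K)$ serves all $\alpha \in (0,1]$. One small slip: the entropy estimate $\binom{m}{s} \le (e/\alpha)^{2\alpha m}$ uses $s \le 2\alpha m$ and hence fails when $\alpha m < 1$ (there $s = 1$ and $\binom{m}{1} = m$ can exceed $(e/\alpha)^{2\alpha m}$); the argument survives unchanged because in that regime $m/\alpha > m^2$, so the tail term $\exp(-c' C_K^2 m/(\alpha K^4))$ crushes the factor $m$ anyway, but the case split should be stated. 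You might also note explicitly that passing from ``more than $\alpha m$ bad indices'' to ``a subset of size exactly $s$'' and the step $\sqrt{s}\,C_K/\alpha \ge C_K\sqrt{m/\alpha}$ both use $s \ge \alpha m$, which is where the choice $s = \lceil \alpha m\rceil$ matters.
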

\begin{proof}
Applying (\cite{swartquantile}, Proposition 2) with the unit vector $(\vx - \xopt)/\norm{\vx - \xopt}$, excluding the $\beta m$ corrupted rows, yields
\begin{equation*}
|\langle \va_i, \vx \rangle - \langle \va_i, \xopt \rangle| \leq \frac{C_K}{\alpha\sqrt{n}}\norm{\vx - \xopt}
\end{equation*}
for at most $(\alpha + \beta)m$ indices $i$. For each $i$ for which the above holds, we have $\langle \va_i, \xopt \rangle = \tilde{b}_i = b_i - r_i$ (i.e., $b_i^C = 0$). Then the right hand side can be written as
\begin{align*}
    |\langle \va_i, \vx \rangle - \langle \va_i, \xopt\rangle| &= |\langle \va_i, \vx \rangle - b_i + r_i| \\
    &\geq |\langle \va_i, \vx \rangle - b_i| - |r_i| \\
    &\geq |\langle \va_i, \vx \rangle - b_i| - \norm{\vr}_\infty.
\end{align*}
Combining the inequalities yields the result.
\end{proof}

Taking $\alpha \leq 1 - q - \beta$ immediately gives the following corollary, showing that the quantiles are well-concentrated:

\begin{corollary}
\label{qconcentrates}
Under the same assumptions as \cref{lem:residualbound}, and taking $\alpha \leq 1 - q - \beta$, we have
\begin{equation*}
    \mathbb{P}\left(Q_q(\vx) \leq \frac{C_{\alpha}\norm{\vx - \vx^\ast}}{\sqrt{n}} + \norm{\vr}_{\infty}\right) \geq 1 - 2e^{-m}.
\end{equation*}
\end{corollary}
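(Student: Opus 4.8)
The corollary is essentially a restatement of \cref{lem:residualbound} in the language of quantiles, so the plan is to run a short deterministic counting argument on top of the high-probability event the lemma already supplies. First I would invoke \cref{lem:residualbound} with the prescribed $\alpha$, which hands us an event of probability at least $1 - 2e^{-m}$ on which, for every $\vx \in \mathbb{R}^n$, the residual bound $|\langle \va_i, \vx\rangle - b_i| \leq \frac{C_K}{\alpha\sqrt n}\norm{\vx - \xopt} + \norm{\vr}_\infty$ fails for at most $(\alpha + \beta)m$ indices $i$. I would then work entirely inside this event, so that no further probabilistic estimate is needed: the $2e^{-m}$ failure probability in the corollary is inherited verbatim from the lemma.

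The heart of the argument is converting the count $(\alpha+\beta)m$ of exceptional indices into an upper bound on the quantile. Writing $T := \frac{C_K}{\alpha\sqrt n}\norm{\vx - \xopt} + \norm{\vr}_\infty$, the number of indices $i$ with $|\langle\va_i,\vx\rangle - b_i| \leq T$ is at least $m - (\alpha+\beta)m = (1 - \alpha - \beta)m$. Here I would deploy the hypothesis $\alpha \leq 1 - q - \beta$, which rearranges to $1 - \alpha - \beta \geq q$, so at least a $q$-fraction of the residual magnitudes lie at or below $T$. By the definition of the $q$-quantile as the $\lceil qm\rceil$-th smallest of the values $\{|b_i - \langle\va_i,\vx\rangle| : i \in [m]\}$, having at least $qm$ of these values bounded by $T$ forces the $\lceil qm\rceil$-th smallest, and hence $Q_q(\vx)$ itself, to be at most $T$. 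Absorbing $C_K/\alpha$ into the single $\alpha$-dependent constant $C_\alpha$ then yields exactly the claimed inequality $Q_q(\vx) \leq \frac{C_\alpha \norm{\vx - \xopt}}{\sqrt n} + \norm{\vr}_\infty$.

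The only place demanding genuine care, as opposed to the probabilistic content which is already packaged in the lemma, is the bookkeeping around the quantile definition and the ceiling/floor conventions: one must check that ``at least a $q$-fraction of values $\leq T$'' really does control $Q_q$ from above under whatever indexing convention ($\lceil qm\rceil$ versus $\lfloor qm\rfloor$) the paper adopts, and that the non-strict inequality $\alpha \leq 1 - q - \beta$ still leaves enough good indices after rounding. These points are routine but worth stating explicitly, since the phrase ``immediately gives'' conceals precisely this translation between the fraction of well-behaved rows and the order statistic defining the quantile.
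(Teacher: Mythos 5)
Your proposal is correct and matches the paper's intended argument exactly: the paper offers no written proof beyond ``immediately gives,'' and the implicit reasoning is precisely your counting step --- the lemma guarantees the bound on at least $(1-\alpha-\beta)m \geq qm$ residual entries, which forces the $q$-quantile below the threshold, with $C_K/\alpha$ absorbed into $C_\alpha$. Your explicit attention to the quantile/ceiling conventions is a useful elaboration of what the paper leaves unstated, but it does not constitute a different route.
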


We are now ready to prove \cref{maintheorem}.

\begin{proof}[Proof of \cref{maintheorem}]
Let $\mathcal{E}_{Accept}(k)$ denote the event that we sample a row that with residual less than the computed quantile at the $k$th iteration. It is clear that we have $\mathbb{P}(\mathcal{E}_{Accept}(k)) = q$.

Let $J$ be a collection of indices of size $2\beta m$, containing all corrupted indices and at least $\beta m$ acceptable indices. Then split all acceptable indices into two subsets: those inside $J$, denoted by $I_1$, and those outside of $J$, denoted by $I_2$. Let $\mathcal{E}_L^k$ denote the event that at the $k$-th iteration an index in sampled from $L \subset [m]$. We argue that the possible damage to convergence caused by projecting onto a corrupted row in $I_1$ is outweighed by the movement towards the solution caused by projecting onto a row in $I_2$.

Observe firstly that
\begin{multline}
\label{eqn:qsplit}
    \mathbb{E}_k(\norm{\ekpo}^2) = q\mathbb{E}_k(\norm{\ekpo}^2|\mathcal{E}_{Accept}(k+1)) + \\ (1-q)\norm{\ek}^2,
\end{multline}
since we have no update to our iterate if the sampled row was not acceptable.

We now deal with $\mathbb{E}_k(\norm{\ekpo}^2 | \mathcal{E}_{Accept}(k+1))$ by splitting into two cases; sampling a row from $I_1$ or from $I_2$. Note that the probability of sampling an index from $I_1$, conditioned on $\mathcal{E}_{Accept}(k+1)$, $p_J$, satisfies $p_J \leq 2\beta m/qm \leq 2\beta/q$.

Firstly, if we sample from $I_2$, the iterate $\xkpo$ is obtained by performing an iteration of standard RK on the noisy system $\mA_{I_2} \vx = \tilde{\vb}_{I_2} + \vr_{I_2}$. Noting that $I_2$ has size at least $(q-\beta)m$, Proposition 2 from \cite{swartquantile} (with $\alpha = q-\beta$) yields that $\sigma_{min}(\mA_{I_2}) \geq C_{\alpha, D}\sqrt{m/n}$ with high probability, provided that $\mA$ is tall enough. Furthermore since $\mA$ has normalized rows, we have $\norm{\mA_{I_2}}_F \geq \sqrt{(q-\beta)m}$. Thus
\begin{equation*}
    \kappa(\mA_{I_2}) \geq C_{q,D}\sqrt{n}.
\end{equation*}
Then by the analysis of RK with noise in \cite{needellnoisyrk}, we have that 
\begin{align*}
    \mathbb{E}_k(\norm{\ekpo}^2|\mathcal{E}_{I_2}^{k+1}) &\leq \left(1 - \frac{c_1}{n}\right)\norm{\ek}^2 + \norm{\vr_{I_2}}_{\infty}^2 \\
    &\leq \left(1 - \frac{c_1}{n}\right)\norm{\ek}^2 + \norm{\vr}_{\infty}^2.
\end{align*}



The $\beta = 0$ case (i.e., when we have no corruptions) follows immediately from this and \cref{eqn:qsplit}. In the case where $\beta > 0$, i.e., when $I_1$ is not empty, we consider the possibility that we sample from $I_1$. Our update will take the form $\xkpo = \xk - h_i \va_i$, where $|h_i| \leq Q_q(\xk)$, and so we have
\begin{multline*}
    \mathbb{E}_k(\norm{\ekpo}^2|\mathcal{E}_{I_1}^{k+1}) \leq
    \norm{\ek}^2 + Q_q(\xk)^2 + 
    \\2Q_q(\xk)\mathbb{E}_k(|\langle \ek, \va_i \rangle | i \sim \text{Unif}(I_1)).
\end{multline*}
To continue estimating, note that we have by \\ 
(\cite{swartquantile}, Lemma 4), with probability $1 - 2e^{-cm}$,  
\begin{align*}
    \mathbb{E}_k(|\langle \ve_k, \va_i \rangle | | i \sim \text{Unif}(I_1)) &= \frac{1}{|I_1|}\sum_{i \in I_1}|\langle \ek, \va_i \rangle| \\
    &\leq \frac{C\norm{\ek}}{\sqrt{\beta n}}.   
\end{align*}
Then using this and the result of \cref{qconcentrates}:
\begin{multline*}
    \mathbb{E}_k(\norm{\ekpo}^2 | \mathcal{E}_{I_1}^{k+1})\leq \left(1 + \frac{\sqrt{\beta}c_2 + c_3}{n\sqrt{\beta}}\right)\norm{\ek}^2 \\
    + \left(\frac{c_4 \sqrt{\beta} + c_5}{\sqrt{n\beta}}\right)\norm{\vr}_\infty \norm{\ek} + \norm{\vr}_\infty^2.
\end{multline*}
We can now estimate $\mathbb{E}_k(\norm{\ekpo}^2 | \mathcal{E}_{Accept}(k+1))$ as follows:
\begin{multline*}
    \mathbb{E}_k(\norm{\ekpo}^2 | \mathcal{E}_{Accept}(k+1)) = p_J\mathbb{E}_k(\norm{\ekpo}^2 | \mathcal{E}_{I_1}^{k+1}) \\
    + (1-p_J)\mathbb{E}_k(\norm{\ekpo}^2 | \mathcal{E}_{I_2}^{k+1}) \\
    \leq \left(1 - \frac{c_1}{n} + p_J\left(\frac{\sqrt{\beta}(c_1 + c_2) + c_3}{n\sqrt{\beta}}\right)\right)\norm{\ek}^2 \\
    + p_J \left(\frac{c_4 \sqrt{\beta} + c_5}{\sqrt{n\beta}}\right)\norm{\vr}_\infty \norm{\ek} + \norm{\vr}_\infty^2.
\end{multline*}
To handle the $\norm{\vr}_\infty\norm{\ek}$ term we split into two cases. The motivation is that when our error is large relative to the noise, the quantile can detect corruptions well, whereas when the error is small relative to the noise, our movement will be small.  Firstly, if $\sqrt{n}\norm{\vr}_\infty \leq \norm{\ek}$ (i.e. when our error is large), we have
\begin{multline*}
    \mathbb{E}_k(\norm{\ekpo}^2 | \mathcal{E}_{Accept}(k+1)) \leq \left(1 - \frac{c_1}{n} + \right. \\
    \left. p_J\left(\frac{\sqrt{\beta}(c_1 + c_2 + c_4) + c_3 + c_5}{n\sqrt{\beta}}\right)\right) \norm{\ek}^2 + \norm{\vr}_\infty^2 \\
    \leq \left(1 - \frac{0.5c_1}{n}\right)\norm{\ek}^2 + \norm{\vr}_\infty^2
\end{multline*}
for small enough $\beta$ (we need $\sqrt{\beta} \leq cq$). On the other hand, if $\sqrt{n}\norm{\vr}_\infty \geq \norm{\ek}$, we have
\begin{multline*}
    \mathbb{E}_k(\norm{\ekpo}^2 | \mathcal{E}_{Accept}(k+1)) \leq \left( 1- \frac{c_1}{n} + \right. \\ 
    \left. p_J\frac{\sqrt{\beta}(c_1 + c_2) + c_3}{n\sqrt{\beta}}\right)\norm{\ek}^2 + p_J \left(\frac{c_4 \sqrt{\beta} + c_5}{\sqrt{\beta}}\right)\norm{\vr}_\infty^2 \\
    \leq \left(1 - \frac{0.5c_1}{n}\right)\norm{\ek}^2 + \norm{\vr}_\infty^2,
\end{multline*}
again for $\sqrt{\beta} \leq cq$ sufficiently small.

We may now substitute our expressions into \cref{eqn:qsplit} to obtain our per-iteration guarantee:

\begin{equation*}
    \mathbb{E}_k(\norm{\ekpo}^2) \leq  \left(1 - \frac{0.5 qc_1}{n}\right)\norm{\ek}^2 + q\norm{\vr}_\infty^2.
\end{equation*}

By induction, we obtain our overall guarantee:

\begin{multline*}
    \mathbb{E}(\norm{\ek}^2) \leq \left(1 - \frac{0.5 qc_1}{n}\right)^k\norm{\ve_0}^2 + \\
    \sum_{j=0}^{k-1}\left(1 - \frac{0.5 qc_1}{n}\right)^j q\norm{\vr}_\infty^2 \\
    \leq \left(1 - \frac{0.5qc_1}{n}\right)\norm{\ve_0}^2 + \frac{2n}{c_1}\norm{\vr}_\infty^2.
\end{multline*}
\end{proof}
\section{Experimental Results}
Experiments are performed on $2000 \times 100$ standardized Gaussian matrices $\mA$. We sample a Gaussian $\vx^\ast \in \mathbb{R}^{100 \times 1}$, compute $\vb = \mA\vx$, and then corrupt a fraction $\beta$ of the rows of $\vb$ by adding corruptions of size to be specified. We add noise $\vr \in \mathbb{R}^{2000 \times 100}$ with Uniform$(-0.02, 0.02)$ entries, and apply QuantileRK to the resulting system. At each iteration $400$ rows are sampled, from which the subresidual is computed.
 
In \cref{fig:range_of_corr_sizes} we take $q = 0.7$, $\beta = 0.2$, and corrupt the already noisy system with corruptions taken from Uniform$(-k,k)$ for a range of $k$. We see that when corruptions are large relative to the noise, they are better detected by the quantile, faster convergence is achieved. When corruptions are small, they do not disrupt convergence enough to break the method, and convergence is achieved down to the error horizon. 

\begin{figure}[H]
    \centering
    \includegraphics[width=.4\textwidth]{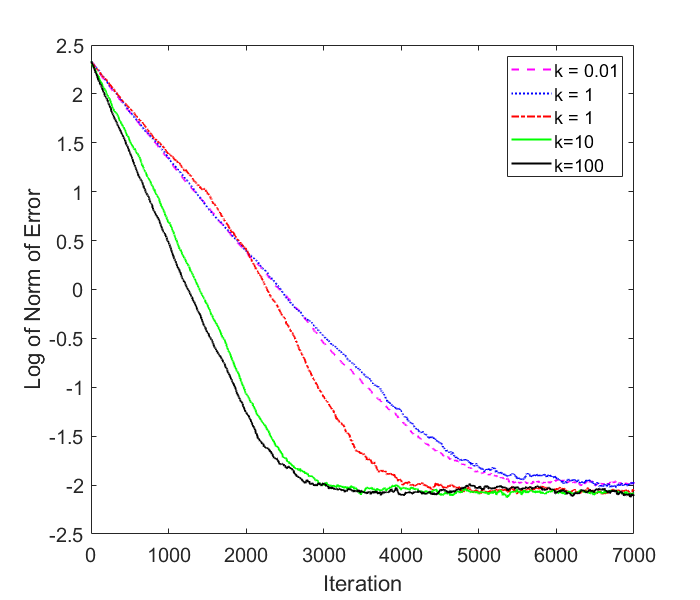}
    \caption{Convergence of QuantileRK($0.7$) with Uniform$(-k,k)$ corruptions, for a range of $k$, and Uniform$(-0.02, 0.02)$ noise.}
    \label{fig:range_of_corr_sizes}
\end{figure}

\begin{figure}
    \centering
    \includegraphics[width=.4\textwidth]{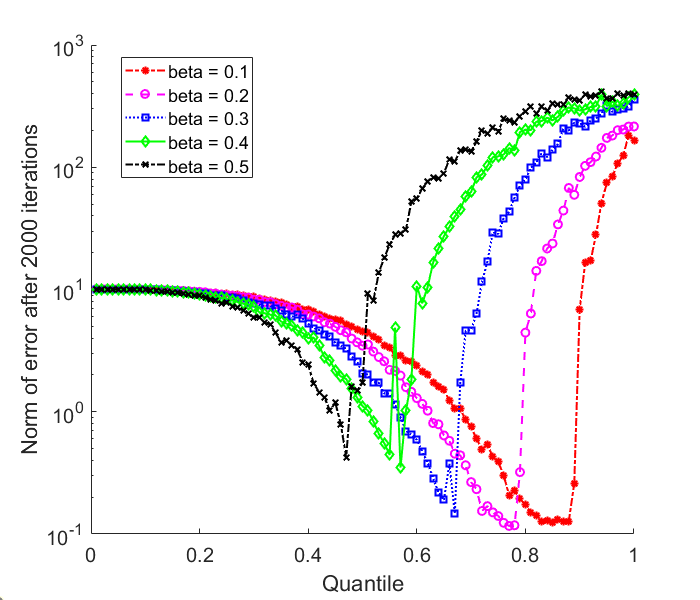}
    \caption{$\norm{\vx_{2000} - \xopt}$ for a range of corruption rates $\beta$ and quantile choices $q$.}
    \label{fig:range_of_q}
\end{figure}
We would like to take $q$ as large as possible so that we may sample rows yielding large movement, but we must take $q < 1-\beta$ to avoid corrupted rows. In \cref{fig:range_of_q} we plot the normed error after 2000 iterations for a range of $q$ and $\beta$, and we see that we can be very aggressive with our choice of $q$: we are able to take it very close to $1-\beta$, and should do so to accelerate convergence.

In \cref{fig:err_vs_thresh} we simulate 100 trials, and compare the error (after 5000 and 10000 iterations respectively) to the predicted horizon. Indeed, our results show that the predicted horizon is closely respected.

\begin{figure}[H]
    \centering
    \includegraphics[width=.4\textwidth]{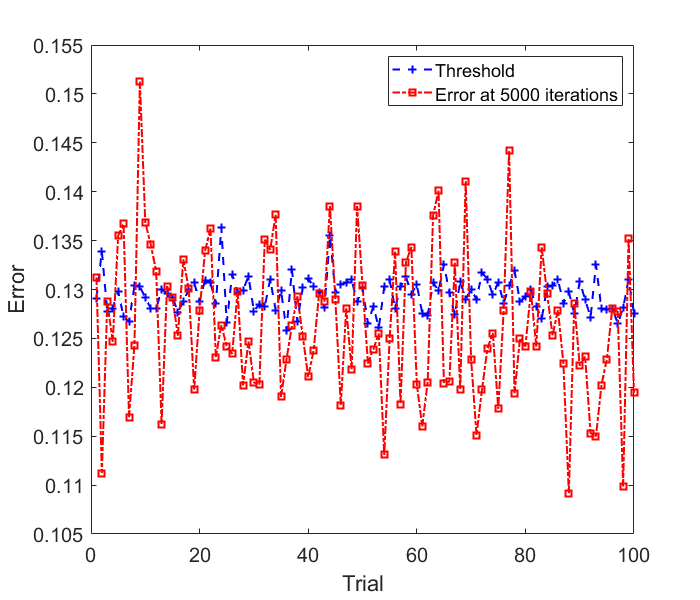}
    \caption{Comparing $\norm{\vx_{5000} - \xopt}$ with predicted error horizon.}
    \label{fig:err_vs_thresh}
\end{figure}

\section{Conclusion and Future Work}\label{conclusion}

We have shown, both theoretically and empirically, that QuantileRK is a powerful method for solving linear systems where measurement data has been damaged by both corruptions and noise. We believe that this method will prove tractable in practice, as corruption and noise are ubiquitous in real-world data.

We are interested in pursuing quantile-based modifications to other projection-based iterative methods, see \cite{gowersap} for a general framework, and also in relaxing the conditions placed on our system: see \cite{steinerberger2021quantilebased} for some work in this direction.

\section*{Acknowledgment}
The authors are grateful for the support of NSF BIGDATA \#1730325 and NSF DMS \#2011140.

\bibliographystyle{IEEEtran}
\bibliography{main}

\end{document}